\documentclass[11pt,leqno]{article}
\usepackage{graphicx, amsfonts, amsthm, amsxtra, amssymb, verbatim, makeidx}
\usepackage{subeqnarray, relsize}
\usepackage[mathscr]{euscript}
\usepackage{hyperref, tikz-cd}
\usepackage{aliascnt}
\usepackage[nameinlink,capitalize,noabbrev]{cleveref}
\hypersetup{
    colorlinks=true,       
    linkcolor=blue,          
    citecolor=blue,        
    filecolor=blue,      
    urlcolor=blue           
}

\newtheorem{prop}{Proposition}
\newtheorem{conj}{Conjecture}

\theoremstyle{remark}
\newtheorem{rem}{Remark}

\theoremstyle{definition}
\newtheorem{defi}{Definition}

\def\TS{{\mathlarger{\bf T}}}                

\def\NLL{{\rm NL}}   

\def\NLL{{\rm NL}}   

\def\Z{\mathbb{Z}}                   
\def\Q{\mathbb{Q}}                   
\def\C{\mathbb{C}}                   
\def\N{\mathbb{N}}                   
\def\dR{{\rm dR}}                    
\def\GL{{\rm GL}}                

\def\X{{\sf X}}                      
\def\T{{\sf T}}                      

\def\Ts{{\sf S}}

\def\codim{{\rm codim}}                

\def\prim{{\rm  0}}                  

\def\P{\mathbb{P}}

\def\O{{\cal O}}                     

\def\Resi{{\rm Resi}}              

\def\cf{r}   

\makeindex
\makeglossary
\begin{document}

\begin{center}
{\LARGE\bf
On a counterexample to a conjecture of J. Harris for octic surfaces
}
\\
\vspace{.1in} {\large {\sc Hossein Movasati}}
\footnote{
Instituto de Matem\'atica Pura e Aplicada, IMPA, Estrada Dona Castorina, 110, 22460-320, Rio de Janeiro, RJ, Brazil,
{\tt \href{http://w3.impa.br/~hossein/}{www.impa.br/$\sim$hossein}, hossein@impa.br.}}
\end{center}


\begin{abstract}


We take a sum $C_1+\cf C_2,\ \cf\in\Q$ of a line $C_1$ and a complete intersection curve $C_2$ of type
$(3,3)$ inside the octic Fermat surface and with no intersection points.  We gather strong evidences to the fact that for all except a finite number of $\cf$,
the Noether-Lefschetz loci attached to the cohomology classes of $C_1+\cf C_2$ are set theoretically distinct $31$ codimensional subvarieties intersecting each other  in a $32$ codimensional subvariety of the ambient space.
The maximum  codimension for components of the Noether-Lefschetz locus in this case is $35$, and hence, we provide a possible counterexample to a conjecture of J. Harris.
\end{abstract}

\section{Introduction}

In the parameter space $\T$ of smooth surfaces of degree $d\geq 4$ in $\P^3$ the Noether-Lefschetz locus $\NLL_d$ is a union of enumerable subvarieties of $\T$ and its points parameterize surfaces with Picard number $\geq 2$. A component of $\NLL_d$  of codimension equal to (resp. strictly less than) $h^{20}=\binom{d-1}{3}$ is called general (resp. special). It is known that general components are
dense in $\T$ in  both usual and Zariski topology, see \cite{CHM88}, \cite[\S 5.3.4]{vo03} and \cite{CL1991}, and special  components of codimension $d-3$  and $2d-7, \ d\geq 5$ are unique and parameterize respectively surfaces with a line and conic, see \cite{green1988, green1989, voisin1988, voisin89}. This implies that for $d=5$ we have only two special components. J. Harris in 1980's conjectured that the number of special components must be
finite. C. Voisin in \cite{voisin1991} found counterexamples to this for a large $d$, however, the conjecture in lower degrees remains open. In \cite{voisin90} it is  proved that for $d=6,7$ the number of reduced special components
is  finite, and so, it is expected that Harris' conjecture is true in these cases.  In \cite{ho2019} we have described a conjectural description of an infinite number of components of $\NLL_8$ of codimension $31$, whereas the general components have codimension $35$. In the present article we gather more evidences to this conjecture.

Let $C_1$ and $C_2$ be respectively a line (complete intersection of type $(1,1)$) and a complete intersection of type $(3,3)$ inside the Fermat octic $X_0: x_0^8+x_1^8+x_2^8+x_3^8=0$ obtained by canonical factorization of $x_0^8+x_1^8$ and $x_2^8+x_3^8$. Further assume that $C_1$ does not intersect $C_2$. For a rational number $r\in\Q,\ r\not=0$ we consider
the cohomology class of 
$$
\delta_0:=[C_1]+r[C_2]\in H^2(X_0,\Q)
$$ 
and its parallel transport $\delta_t\in H^2(X_t,\Q),\ t\in (\T,0)$, that is, $\delta_t$ is a flat section of the Gauss-Manin connection. Here, $\T$ is the full parameters space of smooth octic surfaces. The local Noether-Lefschetz locus $V_{[C_1]+r[C_2] }\subset \T$ parameterizes those $X_t$ such that $\delta_t$ is still of type $(1,1)$.
It has a natural analytic space structure (ring of functions might have zero divisors and nilpotent elements).
Let $V_{C_i}$ be the local branch of the subspace of $\T$ parameterizing a surface together with a line for $i=1$ and a complete intersection of type $(3,3)$ for $i=2$, corresponding to deformations of $(X_0, C_i)$. In the present article we gather strong evidences to the following conjecture.
\begin{conj}
\label{20052026rio}
The local Noether-Lefschetz loci $V_{[C_1]+r[C_2] },\ r\in\Q,\ r\not=0$ are smooth, distinct analytic spaces of codimension $31$  passing through  the common $32$-codimensional subvariety $V_{C_1}\cap V_{C_2}$. In particular,  the underlying analytic varieties are distinct and of codimension $<35$.
\end{conj}
The parameter space $\T$ is of dimension $165$ and it is hard to perform computations on this.
By $\GL(4,\C)$ action on $\T$, we can reduce the above conjecture to $149$ dimensional subspace of $\T$ which is still big. 
 For a natural number $N\geq 2$, we say that $V_{\delta_0}$ is $N$-smooth if the infinitesimal local Noether-Lefschetz locus $V^N_{\delta_0}$ is the $N$-jet of a smooth  variety at $0$, for further details see \cite{ho2019}. In this generality, we can prove that  $V_{\delta_0}$ is $2$-smooth, see \cref{08062026tulivizinho}. For a subscheme $\Ts$ of $\T$ containing the point $0$, we define $V_{\delta_0}^\Ts$ and  $V_{\delta_0}^{\Ts,N}$ in a similar way and so $V_{\delta_0}= V_{\delta_0}^\T$ and   
 $V_{\delta_0}^N= V_{\delta_0}^{\T,N}$. 
In \cite[Theorem 2]{ho2019} we have taken a $32$-dimensional subspace $\Ts$ of $\T$ transversal to  $V_{C_1}\cap V_{c_2}$ at the Fermat point $0$ and have  proved that $V_{[C_1]+r[C_2]}^{\Ts}$ is $5$-smooth. In this article we give a full theoretical proof of the smoothness of  $V_{[C_1]+r[C_2]}^\Ts$, see \cref{16062026impasara}. As this is not enough to prove \cref{20052026rio}, we observe  that \cref{20052026rio} implies the smoothness of $V_{\delta_0}^{\check\Ts}$, where $\check\Ts$ is the $11$ dimensional subscheme of $\T$ given by
\begin{equation}
\label{danithegamer2019}
X_t: \ \ x_0^8+x_1^8+x_2^8+x_3^8-\sum_\alpha t_\alpha x^\alpha=0, \ \ t\in\check \Ts,
\end{equation}
where $x^\alpha$ runs through the monomials 
\begin{equation}
\label{04062026kevin}
x_1^4x_3^4,
   x_0x_1^3x_3^4,
   x_0^2x_1^2x_3^4,
   x_1^4x_2x_3^3,
   x_0x_1^3x_2x_3^3,
   x_0^2x_1^2x_2x_3^3,
   x_1^4x_2^2x_3^2,
   x_0x_1^3x_2^2x_3^2,
   x_0^2x_1^2x_2^2x_3^2,
   x_0^3x_1x_3^4, 
\end{equation}
and a fixed monomial $x_0^{\alpha_0}x_1^{\alpha_1},\ \alpha_0+\alpha_1=8$ of degree $8$. As this parameter space has $11$ dimensions, we can do higher order approximation of periods, and prove that
$V_{\delta_0}^{\check\Ts}$ is $9$-smooth, see \cref{16062026viana}. 

\begin{rem}\rm
 It might happen that all the local Noether-Lefschetz loci,  $V_{[C_1]+\cf [C_2]}$ are inside an irreducible component of $\NLL_8$ of codimension $<31$. Whether this happens or not  seems to be a hard question.
\end{rem}

\section{A special family}
Let $d\geq 2$ be a natural  number and let us consider the following deformation of the Fermat surface of degree $d$
\begin{equation}
\label{20260520propina1}
X_t: \ \ \P\{f_t=0\},\ \ \    f_t:=x_0^d+x_1^d+x_2^d+x_3^d-\sum t_\alpha x^\alpha,\ \ t=(t_\alpha)\in\T,\ \
\end{equation}
where $x^\alpha$ runs through all degree $d$ monomials. We have the following line inside the Fermat surface:
\begin{equation}
\label{20260520propina2}
 {\mathbb P}^1:
\left\{
 \begin{array}{l}
 x_{0}-\zeta_1x_{1}=0,\\
 x_{2}-\zeta_2 x_{3}=0,
 \end{array}
 \right.\ \ \ \ \ \zeta_1^d=\zeta_2^d=-1,
 \end{equation}
with rational coefficients. We will need the following integral formula frequently:
\begin{equation}
\label{churrasco2019}
\mathlarger{\mathlarger{\int}}_{\P^1}
\Resi\left(
\frac{x_0^{\beta_0}x_1^{\beta_1}x_2^{\beta_2} x_{3}^{\beta_3}\cdot  \sum_{i=0}^{3}(-1)^ix_i\widehat{dx_i}}
     {( x_0^{d}+x_1^{d}+x_2^{d}+x_{3}^d)^{2}}\right)
=\left\{
\begin{array}{cc}
\frac{2\pi \sqrt{-1}}{d^2} \zeta_1^{\beta_0+1}\zeta_2^{\beta_2+1}  & \hbox{ if } \beta_0+\beta_1=\beta_2+\beta_3=d-2\\
0 & \hbox{otherwise}
\end{array}\right.
\end{equation}
see \cite[Theorem 1]{roberto}.
We have the residue map $\Resi : H^3_\dR(\P^3\backslash X_t)\to H^2_\dR(X_t)_\prim$ and  $H^2_\dR(X_t)_\prim$  ($\prim$ stand for primitive) is generated by differential forms
$$
\omega_\beta:=\Resi\left(\frac{
 x_0^{\beta_0} x_1^{\beta_1}x_2^{\beta_2}x_{3}^{\beta_{3}}
 \left(  \sum_{i=0}^3 (-1)^ix_i\widehat{dx_i}  \right)
 }{f_t^k}\right),\ \ \ \beta_0+\beta_1+\beta_2+\beta_3=kd-4. 
$$
From now on we use $\delta_0\in H_2(X_0,\Z)$ and  discard the usage of $\delta_0$ in cohomology.
A local Noether-Lefschetz locus $V_{\delta_0}$ for a Hodge cycle $\delta_0\in H_2(X_0,\Q)$ is give by the zero set of the
ideal
$$
I_{\delta_0}:=\left \langle \int_{\delta_t}\omega_\beta,\ \beta_0+\beta_1+\beta_2+\beta_3=d-4\right\rangle\subset \O_{\T,0}.
$$
where $\O_{\T,0}$ is the $\C$-algebra of holomorphic functions in a neighborhood of $0$. 
Now,  we consider the case $d$ even  and consider a subscheme $\check\T$ of $\T$ such that
$x^\alpha$ runs through monomials of the form
\begin{equation}
\label{12062026saraagua}
x_0^{\alpha_0}x_1^{\alpha_1}x_2^{\alpha_2}x_3^{\alpha_3},\ \ \alpha_0+\alpha_1=\alpha_2+\alpha_3=\frac{d}{2}.
\end{equation}
Therefore, $\check \T$ is of dimension $(\frac{d}{2}+1)^2$.
Let also $C$ be a divisor in $X_0$ which is a combination of lines of the form \eqref{20260520propina2} with varying $\zeta_1$ and $\zeta_2$.
Let $\delta_{t}\in H_2(X_t,\Z),\ t\in(\T,0)$ be the monodromy (parallel transport) of the cycle
$\delta_0:=[C]\in H_2(X_0,\Z)$ along a path which
connects $0$ to $t$.

\begin{prop}
\label{21052026estacionamentodemerda}
For a monomial
$$
x^\beta=x_0^{\beta_0} x_1^{\beta_1}x_2^{\beta_2}x_{3}^{\beta_{3}}, \ \beta_0+\beta_1+\beta_2+\beta_3=kd-4,\ \ \beta_0+\beta_1\not=k\frac{d}{2}-2
$$
we have
 \begin{equation}
 \mathlarger{\mathlarger{\int}}_{\delta_t}\omega_\beta =0,\ \ t\in\check \T.
\end{equation}
\end{prop}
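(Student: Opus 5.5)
We argue by a torus-symmetry (character) argument, with the integral formula \eqref{churrasco2019} serving only as a consistency check at $t=0$. Since $C$ is a $\Z$-combination of lines of the form \eqref{20260520propina2}, it suffices to prove the vanishing for a single such line $\P^1$ and its parallel transport $\delta_t$. Consider the action of $\mu_d$ on $\P^3$ by $g_\lambda:(x_0,x_1,x_2,x_3)\mapsto(\lambda x_0,\lambda x_1,x_2,x_3)$, $\lambda^d=1$.

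The first step is to check that the family over $\check\T$ is invariant. Each monomial in \eqref{12062026saraagua} has $\alpha_0+\alpha_1=d/2$, so $g_\lambda^*x^\alpha=\lambda^{d/2}x^\alpha$, whereas the Fermat monomials pick up $\lambda^d=1$ or $1$. This is a problem, so we replace $g_\lambda$ by the action $\lambda\in\mu_{d/2}$, or more cleanly by $\lambda$ with $\lambda^{d/2}=1$. Then $\lambda^d=1$ also holds, and $f_t\circ g_\lambda=f_t$ for all $t\in\check\T$. Consequently $g_\lambda$ restricts to an automorphism of each $X_t$, $t\in\check\T$.

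The second step is to check that the cycle is invariant. We have $g_\lambda(\P^1)$ given by $x_0-\zeta_1x_1=0$, $x_2-\zeta_2x_3=0$ again, since the first equation is homogeneous in $(x_0,x_1)$. So $g_\lambda$ maps the line to itself, and $g_{\lambda*}\delta_0=\delta_0$. Because $g_\lambda$ acts fibrewise on the family over the connected (local) base $\check\T$ and commutes with parallel transport, we get $g_{\lambda*}\delta_t=\delta_t$ for all $t\in\check\T$. This is the step needing care: one uses that $g_\lambda$ is homotopic to the identity in $\mathrm{PGL}(4)$, or simply that the parallel transport of a $g$-invariant class along the $g$-invariant family stays $g$-invariant by continuity and discreteness of $H_2(X_t,\Z)$.

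The third step computes the character of the form. Write $\Omega=\sum(-1)^ix_i\widehat{dx_i}$. Then $g_\lambda^*\Omega=\lambda^2\Omega$, and
\[
g_\lambda^*\omega_\beta=\lambda^{\beta_0+\beta_1+2}\,\omega_\beta .
\]
Hence
\[
\int_{\delta_t}\omega_\beta=\int_{g_{\lambda*}\delta_t}\omega_\beta=\lambda^{\beta_0+\beta_1+2}\int_{\delta_t}\omega_\beta .
\]
The integral therefore vanishes unless $\beta_0+\beta_1+2\equiv 0 \pmod{d/2}$.

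The main obstacle is that this congruence is weaker than the required condition $\beta_0+\beta_1= kd/2-2$. To close the gap, we also use the continuous symmetry: the torus $x_i\mapsto s\,x_i$ $(i=0,1)$, $x_j\mapsto s^{-1}x_j$ $(j=2,3)$ does not preserve $f_t$. So we instead rescale the parameters. Under $x_{0,1}\mapsto sx_{0,1}$, $x_{2,3}\mapsto s^{-1}x_{2,3}$, the polynomial $f_t$ becomes $s^d(x_0^d+x_1^d)+s^{-d}(x_2^d+x_3^d)-\sum t_\alpha x^\alpha$, with the $\check\T$ monomials fixed. Combining this with the overall weighting and the homogeneity of $\omega_\beta$ of degree $k$, the integral $P_\beta(t)=\int_{\delta_t}\omega_\beta$ expands as a power series in $t$. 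Each term is, via the Taylor expansion $\omega_\beta(t)=\sum \frac{(k+m-1)!}{(k-1)!m!}\big(\sum t_\alpha x^\alpha\big)^m x^\beta\Omega/F^{k+m}$ with $F$ the Fermat polynomial, a sum of Fermat periods $\int_{\P^1}\Resi(x^{\beta+\sum\alpha}\Omega/F^{k+m})$. Each monomial multiplied in raises $\beta_0+\beta_1$ and $\beta_2+\beta_3$ by exactly $d/2$. Reducing $F^{k+m}$-denominators to the case $F^2$ by the standard Griffiths reduction, which for Fermat preserves the bidegree defect $(\beta_0+\beta_1)-(\beta_2+\beta_3)$, and then applying \eqref{churrasco2019}, the term vanishes unless
\[
(\beta_0+\beta_1)-(\beta_2+\beta_3)=0 .
\]
Since $\beta_0+\beta_1+\beta_2+\beta_3=kd-4$, this is exactly $\beta_0+\beta_1=kd/2-2$. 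This term-by-term computation, and in particular checking that Griffiths reduction on the Fermat polynomial preserves the balance, is the point I expect to require the most care.
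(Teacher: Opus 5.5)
Your first three steps are correct, and they take a different, cleaner route than the paper. For $\lambda^{d/2}=1$ the map $g_\lambda$ is a fibrewise automorphism of the family over $\check\T$ fixing every line \eqref{20260520propina2}. Hence it commutes with parallel transport and fixes $\delta_t$. Your alternative justification, a homotopy to the identity in $\mathrm{PGL}(4)$, is not valid, since that homotopy does not preserve $X_t$. The identity $g_\lambda^*\omega_\beta=\lambda^{\beta_0+\beta_1+2}\omega_\beta$ then gives vanishing on all of $\check\T$ unless $\frac d2\mid\beta_0+\beta_1+2$.

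\textbf{The gap is your last step.} For the Fermat polynomial, Griffiths reduction reads
\[
\frac{x_i^{d-1}x^{\gamma}\,\Omega}{F^{k}}\equiv\frac{\gamma_i}{d(k-1)}\,\frac{x^{\gamma-e_i}\,\Omega}{F^{k-1}}
\]
modulo exact forms. It lowers one exponent by $d$, so it shifts $(\beta_0+\beta_1)-(\beta_2+\beta_3)$ by $\mp d$. Only the class of this defect modulo $d$ is invariant. Because $\sum\beta_i=kd-4$, the condition that this class be $0$ is equivalent to $\frac d2\mid\beta_0+\beta_1+2$, which you already had. So the Taylor-expansion argument adds nothing.

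No argument can close the gap, because for $k\geq 2$ the statement is false. Take $d=8$ and a single line.
\begin{itemize}
\item For $k=3$: $\frac{x_0^6x_2^{14}\Omega}{F^3}\equiv\frac{7}{16}\frac{x_0^6x_2^{6}\Omega}{F^2}$, whose period is $\frac{7}{16}\cdot\frac{2\pi\sqrt{-1}}{64}\zeta_1^7\zeta_2^7\neq0$ by \eqref{churrasco2019}. Yet $\beta_0+\beta_1=6\neq10$.
\item For $k=2$: take $\beta=(2,0,10,0)$, or the reduced monomial $(2,0,6,4)$, so $\beta_0+\beta_1=2\neq6$. Its derivative at $0$ along $t_\alpha$, with $x^\alpha=x_0^4x_2^4$, is $2\int_{\delta_0}\omega_{\beta+\alpha}$. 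This reduces in the same way to a nonzero multiple of a period in \eqref{churrasco2019} with $\beta_0+\beta_1=\beta_2+\beta_3=6$.
\end{itemize}

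The paper's proof has the same hole. After the derivative formula reduces everything to $t=0$, the Taylor coefficients are Fermat periods with pole order $k+m$. But \eqref{churrasco2019} only treats pole order $2$, and reducing to pole order $2$ is exactly what breaks the balance.

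What is true is the case $k=1$, and it is all that is used later: the generators of $I_{\delta_0}$ and the count $\hat h^{20}=(\frac d2-1)^2$ involve only $k=1$. There $0\le\beta_0+\beta_1\le d-4$, so $\frac d2\mid\beta_0+\beta_1+2$ forces $\beta_0+\beta_1=\frac d2-2$. Hence your Steps 1--3 by themselves prove the needed case, without any period formula. You should discard Step 4 and state the result either for $k=1$, or in the corrected form ``the period vanishes on $\check\T$ unless $\frac d2\mid\beta_0+\beta_1+2$''. The same range argument also settles $k=3$ when all $\beta_i\le d-2$, but not $k=2$.
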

\begin{proof}
 Let $t_\alpha$ be the coefficient of the monomial $x^\alpha$ in the deformation \eqref{20260520propina1}. We have
 $$
 \frac{\partial}{\partial t_\alpha}\int_{\delta_t}\omega_{\beta}=k\int_{\delta_t}\omega_{\beta+\alpha}.
 $$
 Therefore, we only need to prove the proposition for $t=0$, that is, the Fermat surface. The affirmation now follows from the integral formula \eqref{churrasco2019}. 
\end{proof}
\begin{rem}\rm
 The whole discussion in this section is valid for a smooth  surface of the form $P(x_0,x_1)+Q(x_2,x_3)=0$ instead of the Fermat surface, where $P,Q$ are two homogeneous polynomial of degree $d$.
\end{rem}
\begin{rem}\rm
 \cref{21052026estacionamentodemerda} suggests that the subspace $H_\C$ of $H^2_\dR(X_t)_\prim,\ t\in\check\T$ given by the differential forms
 $$
 \omega_{\beta},\ \beta_0+\beta_1=\beta_2+\beta_3=k\frac{d}{2}-2
 $$
form a  variation of Hodge structures on $\check\T$ with Hodge numbers $(\hat h^{20},\hat h^{11},\hat h^{02})=((\frac{d}{2}-1)^2,(d-1)^2,  (\frac{d}{2}-1)^2)$.
For this we must prove that there is a $(d-1)^2+2(\frac{d}{2}-1)^2$ dimensional subspace $H_{\Q}$ of $H^2(X_t,\Q)$ such that $H_\C=H_{\Q}\otimes_{\Q} H_\C$.
By \cref{21052026estacionamentodemerda} we only know that the cohomology classes of the lines \eqref{20260520propina2} with varying $\zeta_1$ and $\zeta_2$ are in $H_\Q$.
For our main purpose, we do not need to prove this. We only need the fact that a local Noether-Lefschetz locus
$V_{\delta_0}^{\check\T},\ \delta_0:=[C]$ is given by $\hat h^{20}:=(\frac{d}{2}-1)^2$ equations, whereas $V_\delta^\T$ is given by $h^{20}=\binom{d-1}{3}$ equations.
\end{rem}
\begin{defi}\rm
We say that $V_{\delta_0}^{{\check\T}}$ (resp. $V_{\delta_0}^{\T}$) is very
general if $\codim(\TS_0 V_{\delta_0}^{{\check\T}})=\hat h^{20}$ (resp.  $\codim(\TS_0 V_{\delta_0}^{\T})= h^{20}$ ).
\end{defi}
In this article we treat a cycle $\delta_0$ such that $V_{\delta_0}^{\T}$ is not very general, however, $V_{\delta_0}^{{\check\T}}$ is general. For this reason it is useful to know the following proposition. 
\begin{prop}
\label{08062026ze1000}
Let $\delta_1, \delta_2\in H_2(X_0,\Q)$ be two cycles with non-zero primitive part.
If the locus $V_{\delta_1+\cf\delta_2}^{{\check\T}}$ is very general for at least one $\cf\in\Q$ and $\TS_0V_{\delta_1}^{\check\T}\not\subset \TS_0V_{\delta_2}^{\check\T}$
then the underlying analytic varieties of
$V_{[C_1]+\cf[C_2]}^\T,\ \cf\in\Q$ have infinite number of irreducible components $V_i,i\in\N$ with no inclusion between them.

\end{prop}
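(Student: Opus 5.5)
The plan is to work on $\check\T$, where the tangent spaces are easy to control, and then lift to $\T$. Write $\delta_r:=\delta_1+\cf\delta_2$. The ideal $I_{\delta_r}$ is generated by the periods $\int_{\delta_{r,t}}\omega_\beta=\int_{\delta_{1,t}}\omega_\beta+\cf\int_{\delta_{2,t}}\omega_\beta$. Their linear parts at $0$ are $\ell_\beta^{(1)}+\cf\,\ell_\beta^{(2)}$, where $\ell^{(i)}_\beta$ is the differential at $0$ of the period of $\delta_i$. Only the $\hat h^{20}$ indices $\beta$ with $\beta_0+\beta_1=\beta_2+\beta_3=\frac d2-2$ matter, by \cref{21052026estacionamentodemerda}. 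So $\TS_0V^{\check\T}_{\delta_r}$ is the kernel of the linear map $A_1+\cf A_2:\C^{\dim\check\T}\to\C^{\hat h^{20}}$, where $A_1,A_2$ are constant matrices.

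The first step is to show that very-generality holds for all but finitely many $\cf$. The maximal minors of $A_1+\cf A_2$ are polynomials in $\cf$. By hypothesis one of them is nonzero at some $\cf$, so it vanishes only at finitely many $\cf$. Outside this finite set, $V^{\check\T}_{\delta_r}$ is very general. Its tangent space then has codimension exactly $\hat h^{20}$, which equals the number of defining equations, so it is a smooth germ $W_\cf\subset\check\T$ of that codimension.

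The second step is to show that these $W_\cf$ are pairwise distinct, which I expect to be the main obstacle. Suppose $W_\cf=W_{\cf'}$ with $\cf\neq\cf'$. Then on this germ both $\delta_1+\cf\delta_2$ and $\delta_1+\cf'\delta_2$ stay of type $(1,1)$, hence so do $\delta_1$ and $\delta_2$ separately. This gives $W_\cf\subset V^{\check\T}_{\delta_1}\cap V^{\check\T}_{\delta_2}$, so at the level of tangent spaces $\ker(A_1+\cf A_2)\subset\ker A_1\cap\ker A_2$. The reverse inclusion is automatic, so we get equality. Since $\ker(A_1+\cf A_2)$ has codimension $\hat h^{20}$ while $\ker A_1$ has codimension at most $\hat h^{20}$, this forces $\ker A_1=\ker(A_1+\cf A_2)\subset\ker A_2$. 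That is $\TS_0V^{\check\T}_{\delta_1}\subset\TS_0V^{\check\T}_{\delta_2}$, contradicting the hypothesis. The same argument shows that no $W_\cf$ is contained in another: an inclusion $W_\cf\subseteq W_{\cf'}$ between germs of equal dimension that are both smooth is an equality. Hence we obtain infinitely many distinct irreducible smooth germs $W_\cf$ of the same dimension, with no inclusions among them.

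The third step is to pass from $\check\T$ to $\T$. We have $V^{\check\T}_{\delta_r}=V^{\T}_{\delta_r}\cap\check\T$ set-theoretically. For each good $\cf$, choose an irreducible component $V_\cf$ of the underlying variety of $V^\T_{\delta_r}$ containing $W_\cf$. Two distinct $\cf,\cf'$ cannot share the same component, because that component would contain both $W_\cf$ and $W_{\cf'}$. This is the delicate point: the containment does not immediately give a contradiction, since a single component of $V^\T$ could a priori cut $\check\T$ in several pieces. I would handle it by the same mechanism as in the second step. On a common component $V$, both $\delta_1+\cf\delta_2$ and $\delta_1+\cf'\delta_2$ remain Hodge, hence both $\delta_1$ and $\delta_2$ do. Therefore $V\cap\check\T\subset V^{\check\T}_{\delta_1}\cap V^{\check\T}_{\delta_2}$, and in particular $W_\cf\subset V^{\check\T}_{\delta_1}\cap V^{\check\T}_{\delta_2}$, which was ruled out in the second step. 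Thus the $V_\cf$ are pairwise distinct for the infinitely many good $\cf$.

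Finally, for non-inclusion I would use the same reasoning. If $V_\cf\subset V_{\cf'}$, then $V_\cf$ lies in $V_{\delta_1+\cf'\delta_2}$ as well as in $V_{\delta_1+\cf\delta_2}$. Hence $\delta_1$ and $\delta_2$ are both Hodge along $V_\cf$, and intersecting with $\check\T$ again gives $W_\cf\subset V^{\check\T}_{\delta_1}\cap V^{\check\T}_{\delta_2}$, the same contradiction. Enumerating the good $\cf$ gives the required family $V_i$, $i\in\N$.
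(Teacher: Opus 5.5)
\textbf{There is a genuine gap in your second step.} From $\ker(A_1+rA_2)=\ker A_1\cap\ker A_2\subseteq\ker A_1$ you only learn that $\codim\ker A_1\le\hat h^{20}$. That is automatic, since $A_1$ takes values in $\C^{\hat h^{20}}$. To force $\ker A_1=\ker(A_1+rA_2)$ you need the reverse inequality $\codim\ker A_1\ge\hat h^{20}$, i.e.\ $\codim\TS_0V^{\check\T}_{\delta_1}=\hat h^{20}$. That is not among the stated hypotheses, and without it the implication fails for the linear algebra you use. Take $\hat h^{20}=2$, coordinates $t_1,t_2,t_3$ on $\check\T$, and period vectors $(t_1,0)$ for $\delta_1$ and $(0,t_2)$ for $\delta_2$. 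Then $\TS_0V_{\delta_1}=\{t_1=0\}\not\subset\{t_2=0\}=\TS_0V_{\delta_2}$, and every $V_{\delta_1+r\delta_2}$ with $r\neq 0$ is very general. Yet all of these loci equal $\{t_1=t_2=0\}=V_{\delta_1}\cap V_{\delta_2}$, so they are not distinct. Your third step and your final non-inclusion argument both reduce to the same contradiction, so they inherit the gap.

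The paper's proof hits the same point and gets past it by invoking $\codim\TS_0V^{\check\T}_{\delta_1}=\hat h^{20}$ as ``our hypothesis'', although it is not in the statement. What you actually need is $\codim\bigl(\TS_0V^{\check\T}_{\delta_1}\cap\TS_0V^{\check\T}_{\delta_2}\bigr)>\hat h^{20}$. With that, $W_r\subset V^{\check\T}_{\delta_1}\cap V^{\check\T}_{\delta_2}$ gives $\ker(A_1+rA_2)\subseteq\ker A_1\cap\ker A_2$, and the right-hand side has codimension $>\hat h^{20}$, a contradiction. In the application this is the computed $10>9$ of \eqref{26052026telhas2}. There $\codim\TS_0V^{\check\T}_{C_1}=1$, so with $\delta_1=C_1$ even the paper's extra condition fails. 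You must either swap roles, using that $[C_1]+r[C_2]$ and $[C_2]+r^{-1}[C_1]$ define the same locus, or assume the intersection condition directly.

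With that hypothesis added, the rest of your argument works. Your lifting to $\T$, via ``$\delta_1$ and $\delta_2$ are both Hodge along a common component'', is more careful than the paper's one-line passage from $\check\T$ to $\T$. One minor point: ``only $\hat h^{20}$ equations matter'' comes from \cref{21052026estacionamentodemerda}, which covers only combinations of lines of the form \eqref{20260520propina2}. That is fine here, since $C_2$ is a union of nine such lines, but not for arbitrary $\delta_1,\delta_2$.
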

\begin{proof}
For a pencil of local Noether-Lefschetz loci $V_{\delta_1+\cf \delta_2}^{{\check\T}}$, since
the function $\cf\mapsto \codim (\TS_0V_{\delta_1+\cf\delta_2}^{\check\T} )$ is lower semi-continuous, we know that if
for one $\cf$ it reaches its maximum $\hat h^{20}$, then
for all except a finite number of $\cf$ we have $\codim (\TS_0V_{\delta_1+\cf\delta_2}^{\check\T})=\hat h^{20}$, and hence,
$V_{\delta_1+\cf\delta_2}^{\check\T}$ is smooth, and in particular it is  reduced.
Moreover, for two rational such numbers $\cf_1,\cf_2$ with $\cf_1\not= \cf_2$ we have
$$
\TS_0V_{\delta_1+\cf_1\delta_2}^{\check\T}\cap \TS_0V_{\delta_1+\cf_2\delta_2}^{\check\T}=\TS_0V_{\delta_1}^{\check\T}\cap \TS_0V_{\delta_2}^{\check\T}
$$
and the codimension of this vector space is bigger than $\hat h^{20}$. This follows from our hypothesis
$\TS_0V_{\delta_1}^{\check\T}\not\subset \TS_0V_{\delta_2}^{\check\T}$ and $\codim \TS_0V_{\delta_1}^{\check\T}= \hat h^{20}$.
Therefore, there cannot be inclusion between $\TS_0V_{\delta_1+\cf\delta_2}$ for all $\cf$ except a finite number.
This implies that the vector spaces $\TS_0V_{\delta_1+\cf\delta_2}^{\check\T}$ form a pencil  with the axis $\TS_0V_{\delta_1}\cap \TS_0V_{\delta_2}$. We have proved that intersecting the underlying analytic varieties of  $V_{\delta_1+\cf_1\delta_2}^\T$ with ${\check\T}$, we get infinite number of analytic
varieties with no inclusion between them. This implies the desired statement.
\end{proof}

\section{Evidences to  \cref{20052026rio}}
Let us write $x_0^8+x_1^8=f_1f_3f_{4},\ \deg(f_i)=i$,  $x_2^8+x_3^8=g_1g_3g_{4},\ \deg(g_i)=i$ and take the algebraic cycles
$C_1: f_1=g_1=0$ and $C_2: f_3=g_3=0$ inside the Fermat surface $X_0$.
The first tiny evidence to \cref{20052026rio} is the following:
\begin{prop}
\label{08062026tulivizinho}
  For the full family $\X\to\T$ of octic surfaces,   $V^\T_{[C_1]+\cf[C_2]}$ is $2$-smooth.
\end{prop}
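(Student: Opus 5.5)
The plan is to verify $2$-smoothness directly from the second order Taylor expansion of the periods generating $I_{\delta_0}$, with $\delta_0=[C_1]+\cf[C_2]$. Here $\T$ is the full family \eqref{20260520propina1} with $d=8$, and the generators of $I_{\delta_0}$ are the $h^{20}=\binom{7}{3}=35$ period functions $P_\beta(t):=\int_{\delta_t}\omega_\beta$ with $|\beta|=d-4=4$. The derivative rule used in the proof of \cref{21052026estacionamentodemerda} says that differentiating in $t_\alpha$ raises the pole order and shifts $\beta$ by $\alpha$. Iterating it twice gives
$$
P_\beta(t)=\sum_{\alpha}t_\alpha \int_{\delta_0}\omega_{\beta+\alpha}+\frac{1}{2}\sum_{\alpha,\alpha'}2\,t_\alpha t_{\alpha'}\int_{\delta_0}\omega_{\beta+\alpha+\alpha'}+O(t^3),
$$
where the forms on the right have pole order $2$ and $3$ respectively, and the constant term vanishes since $\delta_0$ is a Hodge cycle. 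The periods of $\omega$ over $[C_1]$ are given by \eqref{churrasco2019} and its higher pole order analogue from \cite{roberto}. For the complete intersection $C_2$ I would use the explicit formula for periods of complete intersection algebraic cycles inside the Fermat surface from \cite{ho2019}; it is again a finite sum of products of roots of unity times $\frac{2\pi\sqrt{-1}}{d^{k}}$ and rational combinatorial factors. Both contributions are linear in $\cf$.

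\textbf{Step 1: the linear parts.} Write $P_\beta=L_\beta+Q_\beta+O(t^3)$. I would first establish that the linear forms $L_\beta$ span a space of dimension exactly $31$ for all but finitely many $\cf$. This is the codimension of $\TS_0V_{\delta_0}$ claimed in \cref{20052026rio}, and I would compute it as the rank of an explicit $35\times 165$ matrix. Because the entries are affine in $\cf$, the rank is $31$ outside finitely many $\cf$, by the same lower semicontinuity as in the proof of \cref{08062026ze1000}. I would choose $31$ indices $\beta^1,\dots,\beta^{31}$ with independent $L_\beta$. Each of the remaining four linear forms then satisfies a linear relation
$$
L_{\gamma}=\sum_{i} a_{\gamma i}L_{\beta^i},
$$
with coefficients $a_{\gamma i}$ rational in $\cf$.

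\textbf{Step 2: reduction to a quadratic condition.} The ideal $I_{\delta_0}+\mathfrak m^3$ is the $2$-jet of a smooth germ of codimension $31$ exactly when each combination
$$
G_\gamma:=P_\gamma-\sum_i a_{\gamma i}P_{\beta^i}
$$
lies in $\langle P_{\beta^1},\dots,P_{\beta^{31}}\rangle+\mathfrak m^3$. Since $G_\gamma$ has no linear part, this is equivalent to the quadratic form $Q_\gamma-\sum_i a_{\gamma i}Q_{\beta^i}$ lying in the ideal generated by the $L_{\beta^i}$ modulo $\mathfrak m^3$. In other words, this quadratic form must restrict to zero on the $134$-dimensional tangent space $\TS_0V_{\delta_0}=\{L_{\beta^i}=0\}$. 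So $2$-smoothness amounts to the vanishing of four explicit quadratic forms on an explicit linear subspace.

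\textbf{Step 3: verifying the vanishing.} Two ingredients reduce the work. First, $\GL(4,\C)$ acts on $\T$ and preserves the Noether--Lefschetz locus, so the $16$-dimensional orbit directions lie in $V_{\delta_0}$; the same holds for the smooth codimension-$32$ locus $V_{C_1}\cap V_{C_2}\subset V_{\delta_0}$. Hence the four quadratic forms vanish on the tangent hyperplane $\TS_0(V_{C_1}\cap V_{C_2})\subset\TS_0V_{\delta_0}$, and their polarisations vanish when one argument lies in it. Only the values on a single complementary vector $v$, and the polarisations pairing $v$ with $\TS_0(V_{C_1}\cap V_{C_2})$, then remain to be checked.

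I would carry out this check by computer, with $\cf$ kept as a symbolic parameter. The data are the second order period values $\int_{\delta_0}\omega_{\beta+\alpha+\alpha'}$ over $C_1$ and $C_2$, computed from the formulas above. The check must show that the resulting expressions, polynomial in $\cf$, vanish identically. Alternatively it would suffice to show they vanish at more values of $\cf$ than their degree, which gives the result for all but finitely many $\cf$.

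The main obstacle is Step 3, specifically getting the second order periods over the non-linear cycle $C_2$ right: these require the pole order $3$ version of the complete intersection period formula, and the bookkeeping over $165^2$ pairs $(\alpha,\alpha')$. I would first test the whole pipeline on $\cf=0$, where $V_{C_1}$ is known to be smooth of codimension $d-3=5$, to validate the formulas before running the general $\cf$ computation.
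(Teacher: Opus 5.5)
Your proposal is correct and is essentially the paper's proof, which is purely computational. The Singular code expands the periods of $[C_1]+\cf[C_2]$ to the truncation order, picks $31$ generators with independent linear parts, and checks by division, order by order, that the remaining four lie in the ideal they generate modulo $\mathfrak m^{3}$; this is exactly your Steps 1--2. It uses that $C_2=\{f_3=g_3=0\}$ is the union of nine lines of the form \eqref{20260520propina2}, so its periods are sums of line periods and no separate complete-intersection formula is needed. The only difference is that the code takes all coefficients equal to $1$, i.e.\ $\cf=1$, whereas you keep $\cf$ symbolic. Your Step 3 shortcut via $V_{C_1}\cap V_{C_2}\subset V_{\delta_0}$ is valid once ``polarisations vanish when one argument lies in it'' is corrected to ``when both arguments lie in $\TS_0(V_{C_1}\cap V_{C_2})$''; the one-argument version is false, although your list of remaining checks (the value at $v$ and the pairings of $v$ with $\TS_0(V_{C_1}\cap V_{C_2})$) already uses the correct version.
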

The proof is purely computational, see \cref{07062026joinville}. Since $V^{\check \T}_{[C_1]+\cf[C_2]}=\check\T\cap V_{[C_1]+\cf [C_2]}^\T$, the next evidence is:
\begin{prop}
\label{16062026impasara}
The local Noether-Lefschetz locus  $V_{[C_1]+\cf[C_2]}^{\check\T},\ \cf\in\Q,\ \cf\not=0$ is a general pencil.
\end{prop}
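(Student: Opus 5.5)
The plan is to show directly that, for all but finitely many $\cf$, the tangent space $\TS_0V^{\check\T}_{[C_1]+\cf[C_2]}$ has codimension $\hat h^{20}=(\frac{8}{2}-1)^2=9$ in the $25$-dimensional space $\check\T$. That is, the locus is very general in the sense of the Definition. Smoothness then follows from the implicit function theorem, and the pencil structure from the argument in the proof of \cref{08062026ze1000}.

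\textbf{Reduction to lines.} Since $f_3=\prod_{j=1}^3(x_0-\zeta_{1,j}x_1)$ and $g_3=\prod_{k=1}^3(x_2-\zeta_{2,k}x_3)$, the curve $C_2=\{f_3=g_3=0\}\cap X_0$ is the union of the $9$ lines $x_0=\zeta_{1,j}x_1,\ x_2=\zeta_{2,k}x_3$. These are exactly lines of the form \eqref{20260520propina2}. Similarly $C_1$ is the line given by the root $(\zeta_{1,0},\zeta_{2,0})$ of $f_1,g_1$. Hence $\delta_0=[C_1]+\cf[C_2]$ is a combination of lines of the type considered in Section 2. By \cref{21052026estacionamentodemerda}, $V^{\check\T}_{\delta_0}$ is then cut out by the $9$ period functions $\int_{\delta_t}\omega_\beta$ with $\beta_0+\beta_1=\beta_2+\beta_3=2$.

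\textbf{The tangent space.} Differentiating as in the proof of \cref{21052026estacionamentodemerda}, the linear parts of these equations are
$$\sum_\alpha t_\alpha\int_{\delta_0}\omega_{\beta+\alpha},\qquad \alpha_0+\alpha_1=\alpha_2+\alpha_3=4.$$
Each entry is evaluated by \eqref{churrasco2019}, since $\beta+\alpha$ has $(\beta+\alpha)_0+(\beta+\alpha)_1=(\beta+\alpha)_2+(\beta+\alpha)_3=6=d-2$. Up to the constant $2\pi\sqrt{-1}/64$, the $9\times 25$ matrix has rows indexed by $(\beta_0,\beta_2)\in\{0,1,2\}^2$, columns indexed by $(\alpha_0,\alpha_2)\in\{0,\dots,4\}^2$, and entries
$$M_\cf=\zeta_{1,0}\zeta_{2,0}\,(a_0\otimes b_0)(a_0'\otimes b_0')^{T}+\cf\sum_{j,k}\zeta_{1,j}\zeta_{2,k}\,(a_j\otimes b_k)(a_j'\otimes b_k')^{T}.$$
Here $a_j=(\zeta_{1,j}^{\beta_0})_{\beta_0}$, $a_j'=(\zeta_{1,j}^{\alpha_0})_{\alpha_0}$, and $b_k,b_k'$ are defined analogously.

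\textbf{Rank count.} The second summand factors as a Kronecker product of two matrices $A\,D_1\,A'^{T}$ and $B\,D_2\,B'^{T}$. Here $A$ is a $3\times 3$ Vandermonde matrix in the distinct roots $\zeta_{1,j}$, and $A'$ is a $5\times 3$ Vandermonde matrix, so each factor has rank $3$. Hence the $C_2$-part alone has rank $9$. Rank $9$ of $M_\cf$ is therefore an open condition in $\cf$ which holds at $\cf=\infty$, i.e. for $\cf^{-1}=0$. So it holds for all but finitely many $\cf$.

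This gives $\codim\TS_0V^{\check\T}_{\delta_0}=9$, which equals the number of defining equations. Consequently $V^{\check\T}_{\delta_0}$ is smooth of codimension $9$. For the pencil property, I would note that the row space of the $C_1$-part is spanned by the single vector $a_0'\otimes b_0'$. The vector $a_0'\otimes b_0'$ is not in the span of the $a_j'\otimes b_k'$, because $\zeta_{1,0}$ and $\zeta_{2,0}$ are roots of $f_1,g_1$, which are distinct from the roots of $f_3,g_3$, and $5\times4$ Vandermonde matrices have full rank. Hence the intersection of the tangent spaces for two distinct generic $\cf$ is the axis $\TS_0V_{C_1}\cap\TS_0V_{C_2}$ of codimension $10$. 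From there the argument of \cref{08062026ze1000} applies verbatim.

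\textbf{Main obstacle.} The step I expect to need the most care is verifying that the Kronecker/Vandermonde factorization above is correct: that the index bookkeeping of \eqref{churrasco2019}, namely exponents $\beta_0+\alpha_0+1$ and $\beta_2+\alpha_2+1$, really splits into $x_0,x_1$ and $x_2,x_3$ factors. I would also have to handle the fact that $C_1$ contributes a rank-one perturbation, so that the finitely many exceptional $\cf$ are genuinely excluded rather than being all $\cf$. If the factorization argument becomes messy, I would fall back on computing $\det$ of a $9\times9$ minor of $M_\cf$ as a polynomial in $\cf$ and checking that it is nonzero.
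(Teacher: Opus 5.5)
Your proof is correct, and its skeleton is the paper's. You cut down to the $9$ equations with $\beta_0+\beta_1=\beta_2+\beta_3=2$ via \cref{21052026estacionamentodemerda}, read off the linear parts from \eqref{churrasco2019}, establish the codimensions $1,9,9,10$ of \eqref{26052026telhas2}, and conclude smoothness and the pencil structure as in \cref{08062026ze1000}.

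The difference is in how those four codimensions are obtained. The paper gets them from a machine rank computation: the code in \cref{07062026joinville} is run at a single generic coefficient and the result is extended by semicontinuity. You prove them by hand through the factorization $M_{C_2}=(AD_1A'^{T})\otimes(BD_2B'^{T})$ into invertible $3\times3$ and full-rank $5\times3$ Vandermonde-type factors. The bookkeeping you flagged is fine: $\zeta_1^{\beta_0+\alpha_0+1}\zeta_2^{\beta_2+\alpha_2+1}$ is exactly the $((\beta_0,\beta_2),(\alpha_0,\alpha_2))$ entry of $(\zeta_1aa'^{T})\otimes(\zeta_2bb'^{T})$.

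Your route gives more than a computer-free proof.

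\textbf{Every $\cf\neq0$.} Together with your independence observation, it gives rank $9$ for every $\cf\neq0$, not just all but finitely many, which matches the quantifier in the statement. Suppose $wM_\cf=0$. Then $\zeta_{1,0}\zeta_{2,0}\,(w\cdot(a_0\otimes b_0))\,(a_0'\otimes b_0')^{T}=-\cf\,wM_{C_2}$ lies in the span of the $a_j'\otimes b_k'$. Since $a_0'\otimes b_0'$ is outside that span, both sides vanish, and $w=0$ because $M_{C_2}$ has full row rank.

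\textbf{The full parameter space.} The same factorization, applied to the blocks $\beta_0+\beta_1=p$, $p=0,\dots,4$, of the full first-order matrix on $\T$, also recovers $5,27,31,32$ in \eqref{26052026telhas1}. There the $C_2$-block ranks are $3,6,9,6,3$ and the $M_\cf$-block ranks are $4,7,9,7,4$.

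What the computer route buys in return is uniformity with the higher-order $N$-smoothness checks of \cref{08062026tulivizinho} and \cref{16062026viana}, which go beyond the first-order matrix.
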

This follows from the tangent space computations:
\begin{equation}
\label{26052026telhas2}
\codim \TS_0V_{C_1}^{\check\T}=1, \ \codim \TS_0V_{C_2}^{\check\T}=9,\ \
\codim \TS_0V_{[C_1]+\cf[C_2]}^{\check\T}=9, \ \ \codim \TS_0V_{[C_1]}^{\check\T}\cap \TS_0V_{[C_2]}^{\check \T}=10,
\end{equation}
It is useful to know that
 \begin{equation}
 \label{26052026telhas1}
 \codim \TS_0V_{C_1}^{\T}=5, \ \codim \TS_0V_{C_2}^{\T}=27,\ \
\codim \TS_0V_{[C_1]+\cf [C_2]}^\T=31,\ \  \codim \TS_0V_{[C_1]}\cap \TS_0V_{[C_2]}^\T=32
\end{equation}
For all these one can use the integral formula \eqref{churrasco2019} and the the computation of the tangent space of a Hodge locus in terms of IVHS, see \cite[5.3.3]{vo03} or \cite[Theorem 16.2]{ho13}, see the computer code in \cref{07062026joinville}.

Combining \cref{16062026impasara} and \cref{08062026ze1000}, we conclude that the local Noether-Lefschetz loci $V_{[C_1]+\cf [C_2]}^\T,\ \cf\in\Q$ have infinite number of irreducible components of codimension $\geq 31$. They might be of codimension $35$ which is not interesting if we are looking for a counterexample to Harris' conjecture.

Let $V_{C_1}, V_{C_2}\subset (\T,0)$ be local analytic varieties parameterizing deformations of $X_0$ together with line $C_1$, respectively with the curve $C_2$.
It is also an easy exercise in algebraic geometry to see that $V_{C_1}$ and $V_{C_2}$ are smooth at $0$, intersect each other transversely at $0$ and
$$
\codim(V_{C_1})=\codim(\TS_0 V_{C_1})=5,\  \codim(V_{C_2})=\codim(\TS_0 V_{C_2})=27,  \
$$
$$
\codim(V_{C_1}\cap V_{C_2})= \codim(\TS_0 (V_{C_1}\cap V_{C_2}))    =32.
$$
Note that $V_{C_1}\cap V_{C_2}$ is a local branch of the space of octic surfaces containing a line (complete intersection of type $(1,1)$) and a complete intersection of type $(3,3)$ with no intersection point.

From now on assume that $V^{\T,N-1}_{[C_1]+\cf [C_2]}$ is smooth but not $V^{\T,N}_{[C_1]+\cf [C_2]}$. By \cref{08062026tulivizinho} we have $N\geq 3$.
Let us take $35$ equations $f_1, f_2,\cdots, f_{35}\in \O_{\T,0}$ of the local Noether-Lefschetz locus $V_{[C_1]+\cf[C_2]}$ for the full parameter space of octic surfaces. We separate $31$ equations $f_1, f_2,\cdots, f_{31}$ with linearly independent linear parts.
By definition of $(N-1)$-smoothness, see \cite[page 298,  18.28]{ho13}, we can replace the rest of the equations $f_i,\ i=32,33,34,35$ by
new ones $\check f_1,\check f_2,\check f_3,\check f_4$ with a zero of order at least $N$ at $0$. This means if we write the Taylor series of $\check f_i$ at $0$, then it starts with a homogeneous polynomial of degree $N$ in $t$.

 The  smooth analytic variety $V_{C_1}\cap V_{C_2}$ is inside the smooth variety $V: f_1=\cdots=f_{31}=0$ is of codimension $1$, therefore, there is $f\in \O_{\T,0}$ with non-zero linear part such that $V_1:=V_{C_1}\cap V_{C_2}$ is given by
$f_1=\cdots=f_{31}=f=0$.
We know that $V_1$ is inside $V_{[C_1]+\cf [C_2]}$. This implies that $f|_V$ divides $\check f_i|_V$'s and so
$$
\check f_{i+31}=fh_i,\ i=1, 2,3,4,\hbox{module the ideal } \langle f_1,\cdots,f_{31}\rangle\subset \O_{\T,0},\ \hbox{ for some }  \ h_i\in \O_{\T,0}.
$$
All $h_i$'s cannot be in the ideal $\langle f_1,f_2,\ldots,f_{31}\rangle$, otherwise, $V_{[C_1]+\cf [C_2]}^{\T, N}$ is smooth. Moreover, the Taylor series of $h_i$'s starts at degree $N-1$. We conclude that
$$
\langle f_1,f_2,\ldots, f_{35}\rangle=\langle f_1,\ldots,f_{31},fh_1,fh_2,fh_3, fh_4\rangle,
$$
which implies that the underlying analytic variety of $V^\T_{[C_1]+\cf [C_2]}$ is a union of two varieties $V_1$ and
$V_2={\rm Zero}\langle f_1,\ldots,f_{31}, h_1,\cdots,h_4\rangle$.
In summary, the hypothesis that 
$V^{\T,N-1}_{[C_1]+\cf [C_2]}$ is smooth but not $V^{\T,N}_{[C_1]+\cf [C_2]}$ provides us with a local Noether-Lefschetz locus whose underlying analytic variety is inside a smooth analytic variety $V$ of codimension $31$,  and it is the union of $V_1$ and $V_2$ which are proper analytic subspaces of $V$,  $V_1$ is smooth and of codimension one in $V$ and $V_2$ is singular at $0$ and might not be irreducible or reduced.  By definition of $\Ts$ in \cite[Theorem 2]{ho2019}, the intersection of $V_{C_1}\cap V_{C_2}$ with $\Ts$ is a point, moreover, the restriction of $V^\T_{[C_1]+\cf [C_2]}$ to $\Ts$ is smooth. All these implies that restricted to  $\Ts$ we have $V=V_2$. 

We would like to get more evidence to the fact that $V^\T_{[C_1]+\cf [C_2]}$ is smooth, and in particular it is irreducible. For this, we take a slice $\check \Ts$ of $\T$  with non-zero dimensional intersection with both $V^\Ts_{[C_1]+\cf [C_2]}$ and $V_{C_1}\cap V_{C_2}$ and such that $\TS_0\check \Ts$ intersects $\TS_0V^\T_{[C_1]+\cf [C_2]}$ transversely. The ($N-$) smoothness of $V^\T_{[C_1]+\cf [C_2]}$ implies  the ($N$-) smoothness of  $V^{\check \Ts}_{[C_1]+\cf [C_2]}$. In order to achieve the latter  for large $N$, the dimension of such a slice must be low.  We take the family \eqref{danithegamer2019},
where the sum runs through the collection of $10$ monomials in \eqref{04062026kevin} 
and a set $M$ of monomials of the form $x_0^ax_1^{8-a},\ \ a=0,1,\ldots,8$.
The set of monomials \eqref{04062026kevin}, is the intersection of the set of monomials \eqref{12062026saraagua} and \cite[before Theorem 2]{ho2019}. For the corresponding parameter space, let us call it $\check \Ts$, we have
$$
\codim(\TS_0V^{\check\Ts_1}_{[C_1]+\cf[C_2]})=9
$$
and hence $V^{\check\Ts_1}_{[C_1]+\cf[C_2]}$ is a general pencil.
\begin{prop}
\label{16062026viana}
 For the family above with the $M$ all the monomials
 \begin{equation}
\label{12062026ezterab}
 x_0^{a}x_1^{8-a},\ \ a=0,1,\ldots,8.
\end{equation}
 $V^{\check \Ts}_{[C_1]+\cf [C_2]}$ is $6$-smooth. For $M$ a single monomial  $x_0^{a}x_1^{8-a}$,  $V^{\check \Ts}_{[C_1]+\cf [C_2]}$ is $9$-smooth.
\end{prop}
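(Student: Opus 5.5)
The proof will be computational, in the spirit of \cref{08062026tulivizinho} and \cite[Theorem 2]{ho2019}: compute the Taylor expansion of the periods $\int_{\delta_t}\omega_\beta$ over the slice $\check\Ts$ up to the required order, and check $N$-smoothness of the ideal they generate. First, on $\check\Ts$ I will write the period of $\delta_t=[C_1]+\cf[C_2]$ against $\omega_\beta$, $\beta_0+\cdots+\beta_3=4$, as a power series in $t=(t_\alpha)$. Expanding $1/f_t$ in the geometric series in $\sum t_\alpha x^\alpha$ and using the derivative rule from the proof of \cref{21052026estacionamentodemerda} reduces the coefficient of $t^{a}$ to a multinomial factor times $\int_{\delta_0}\omega_{\beta+\sum a_\alpha\alpha}$ on the Fermat surface, with pole order $k=1+|a|$. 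These Fermat periods over the line $C_1$ and the complete intersection $C_2$ are explicit. For $C_1$ I use \eqref{churrasco2019} in its general pole order $k$ form, \cite[Theorem 1]{roberto}. For $C_2$ I use the analogous formula for complete intersections $f_3=g_3=0$ from \cite{roberto}, which gives a sum over the roots $\zeta_1,\zeta_2$ defining $f_3,g_3$ and applies because $C_2$ is homologous to a sum of lines of type \eqref{20260520propina2}. By \cref{21052026estacionamentodemerda}, only the $\beta$ with $\beta_0+\beta_1=\beta_2+\beta_3=2$ contribute. Nine of the ten monomials in \eqref{04062026kevin} are of the form \eqref{12062026saraagua}, but $x_0^3x_1x_3^4$ and the monomials in $M$ are not. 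So I must check directly whether the remaining $35-9$ periods vanish identically on $\check\Ts$ or only to high order. This step is essential, since it fixes the number of generators of the ideal $I_{\delta_0}$ restricted to $\check\Ts$.

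Next I truncate every period at order $N$ ($N=6$ for $M$ the nine monomials \eqref{12062026ezterab}, $N=9$ for a single monomial). Because $\check\Ts$ has dimension $18$ or $11$, this is feasible. I will clear the transcendental factor $2\pi\sqrt{-1}$ and work over $\Q(\zeta_{16})$, since the $\zeta$'s are $16$-th roots of unity. The tangent space computation already gives $\codim\TS_0V^{\check\Ts}_{[C_1]+\cf[C_2]}=9$. I will select $9$ periods $g_1,\dots,g_9$ whose linear parts are independent for generic $\cf$ (this matches \eqref{26052026telhas2}) and solve $g_1=\cdots=g_9=0$ order by order. That is, I write nine coordinates as power series in the remaining free coordinates up to degree $N$, using the implicit function theorem on jets. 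Following the definition of $N$-smoothness in \cite[18.28]{ho13}, I then substitute this parametrization into every other period and verify that the result vanishes modulo $\mathfrak m^{N+1}$. I will carry $\cf$ as a symbol, so the identities hold for all but finitely many $\cf$; alternatively I check several random rational values of $\cf$ and argue by polynomial dependence on $\cf$.

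The main obstacle is the computational size of the expansion at orders $6$ and $9$. The number of monomials in $t$ grows quickly, and so does the number of high pole order Fermat periods over $C_2$, each of which is a sum over root pairs in a cyclotomic field. To keep this tractable I will precompute a table $\int_{\delta_0}\omega_\gamma$ indexed by exponent vectors $\gamma$. Entries vanish unless $\gamma$ satisfies the congruence conditions of \eqref{churrasco2019}, and this sparsity is what makes the computation feasible. I will also exploit the bigrading $\alpha_0+\alpha_1$, $\alpha_2+\alpha_3$ to discard terms that vanish a priori. The code will be given in \cref{07062026joinville}.
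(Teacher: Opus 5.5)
Your plan is essentially the paper's proof, which consists only of the {\sc Singular} computation in \cref{07062026joinville}: it sums the Fermat periods of the ten lines making up $C_1$ and $C_2$, expands the $35$ periods on $\check\Ts$ to a fixed order, and checks $N$-smoothness degree by degree by reducing the other generators modulo nine generators with independent linear parts, which is equivalent to your substitution into the jet parametrization. One correction, which also settles the step you flag: $x_0^3x_1x_3^4$ has bidegree $(4,4)$ like the rest of \eqref{04062026kevin} and only the monomials of $M$ fall outside it, with bidegree $(8,0)$, so the invariance of each line $x_0=\zeta_1x_1,\ x_2=\zeta_2x_3$ under $(x_0,x_1,x_2,x_3)\mapsto(\epsilon x_0,\epsilon x_1,\eta x_2,\eta x_3)$, $\epsilon^8=\eta^8=1$, kills every Taylor coefficient of the $26$ periods with $\beta_0+\beta_1\neq 2$; this leaves nine generators with independent linear parts, so the locus is in fact smooth to all orders (note also that $\dim\check\Ts=19$, not $18$, when $M$ has nine monomials).
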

\begin{proof}
 See the computer code in \cref{07062026joinville}. 
\end{proof}

\section{Usage of LLM's}
Recently LLM's have become powerful tools which have learned almost all algorithms and reasoning available in the web. They also learn very fast content of new mathematics, as far as such a new mathematics relies on classical concept such as Taylor series. For this reason May 16, 2026,  I asked the following question from an LLM.
What  is the largest $N$ such that you can  prove \cite[Theorem 2]{ho2019} for $N$-th order approximation of the Noether-Lefschetz locus? This theorem is proved for $N=5$.  As the amount of computations for $N\geq 6$ increases exponentially, the LLM learned the content of \cite{ho2019} well, and in particular, it found out that the $32$-dimensional parameter space \cite[ (9) ]{ho2019} contains the  $10$-dimensional parameter space in \eqref{04062026kevin}.
This parameter space  is the intersection of the set of monomials in \cite[(9)]{ho2019} and those in \eqref{20260520propina1}. For this parameter space, let us call it $\check\Ts$, we can verify easily that $\codim(\TS_0V^{\check\Ts}_{[C_1]+\cf[C_2]})=9$, and hence, $V^{\check\Ts}_{[C_1]+\cf[C_2]}$ is a general pencil of curves. Therefore, the answer
the above question is $N=\infty$.
This was the beginning of my excitement with LLM's, however, the rest of my questions from the LLM only produced hallucination both for LLM and myself. Therefore, the author stopped asking questions from LLM's and started to write this article and the computer code in the next section without any external assistance.


\section{The computer code}
\label{07062026joinville}
Here is the computer code used for computations in this article. It is
written in {\sc Singular},  see \cite{GPS01}. It relies on the  the library
{\tt foliation.lib} which can be found in \href{http://w3.impa.br/~hossein/mod-fol-exa.html}
{the author's} or \href{https://github.com/movasati/NoetherLefschetz/blob/master/foliation.lib/}{\tt github}'s webpage.
{\tiny
\begin{verbatim}
LIB "foliation.lib";
int n=2; int d=8;  int tru=8;   //--truncation of periods at degree tru
int gene=11; //--a generic coefficient;
int dhalf= d div 2;
int i; int j; list ll;list Periods; list lcycles; intvec besh; list pkom; list lMpij;
intvec mlist=d;  for (i=1;i<=n; i=i+1){mlist=mlist,d;}

ring r=(0,z), (x(1..n+1)),dp;
poly cp=cyclotomic(2*d); int degext=deg(cp) div deg(var(1));
cp=subst(cp, x(1),z); minpoly =number(cp);
//-------------------Cycle (3,3)+rationalnumber*Cycle (3,3)---------------
int d1=3; int d2=3; int s1=1; int s2=1; int m1=0; int m2=0;
//---periods of 10 linear cycles
ll=TwoCI(n,d,intvec(d1,d2),intvec(s1,s2),intvec(m1,m2));
   lcycles=ll[1]; besh=ll[2]; Periods=list();
   for (i=1; i<=size(lcycles); i=i+1)
        {
        Periods=insert(Periods,
        PeriodLinearCycle(mlist, lcycles[i][1], lcycles[i][2],par(1)), size(Periods));
        }
//------------------periods of (3,3) and (1,1)----
pkom=list();
    for (i=1; i<=size(besh)-1; i=i+1)
         {
                            for (j=besh[i]+1; j<=besh[i+1]-1; j=j+1)
                                {
                                Periods[besh[i]]=Periods[besh[i]]+Periods[j];
                                }
                            pkom=insert(pkom, Periods[besh[i]], size(pkom));
        }
Periods=pkom;
//------------------IVHS and p_{i+j} matrix for full parameter space---------
lMpij=list();
for (i=1; i<=size(Periods); i=i+1)
    {
    lMpij=insert(lMpij, Matrixpij(mlist, Periods[i]), size(lMpij));
    }
rank(lMpij[1]), rank(lMpij[2]), rank(lMpij[1]+gene*lMpij[2]),
                             rank(concat(transpose(lMpij[1]), transpose(lMpij[2])));
//-------------------IVHS and p_{i+j} matrix for check T parameter space---------
list lmonx;   list R1=Monomials(list(var(1)), d div 2 ,1); list R2= Monomials(list(var(2),var(3)), d div 2,2)[(d div 2)+1]; R1; R2;
for (i=1; i<=size(R1); i=i+1)
    {for (j=1; j<=size(R2); j=j+1){ lmonx=insert(lmonx,R1[i]*R2[j], size(lmonx));}}
lMpij=list();
for (i=1; i<=size(Periods); i=i+1)
    {
    lMpij=insert(lMpij, Matrixpij(mlist, Periods[i],lmonx), size(lMpij));
    }
rank(lMpij[1]), rank(lMpij[2]), rank(lMpij[1]+gene*lMpij[2]),
                             rank(concat(transpose(lMpij[1]), transpose(lMpij[2])));
//------------------The rest of the code is taken from the main body of SmoothReduced procedure from foliations.lib---
//-------------------int zb=10;    ll=SmoothReduced(mlist,tru, lcycles, intvec(1,-zb), intvec(zb,zb), besh,0);
//-----------------------------------------------------
list wlist;         //--weight of the variables
for (i=1; i<=size(mlist); i=i+1)
    { wlist=insert(wlist, (d div mlist[i]), size(wlist));}

ring r2=(0,z), (x(1..n+1)),wp(wlist[1..n+1]);
poly cp=cyclotomic(2*d); degext=deg(cp) div deg(var(1));
cp=subst(cp, x(1),z); minpoly =number(cp);
//----No need to transfer the data from ring r to r2. All the data are integer valued---only Periods---
list Periods=imap(r, Periods);
list llm=MixedHodgeFermat(mlist);
list BasisDR;            for (i=1; i<=size(llm[1]); i=i+1)  { BasisDR=BasisDR+llm[1][i];}
list Fn2p1;              for (i=1; i<=n div 2; i=i+1)  { Fn2p1=Fn2p1+llm[1][i];}
//-------------------------------------------------------------------------------------
//---------------------Here you can give any list of monomials for the deformation space.
//----Full family    list lmonx=llm[4];
//---Two parameter family  i=size(llm[4]); list lmonx=llm[4][random(1,i)], llm[4][random(1,i)];

//---32 dimensional space perpendicular to VC1 \cap VC2
//----list lmonx=x(1)^6*x(3)^2, x(1)^6*x(2)*x(3), x(1)^5*x(3)^3, x(1)^4*x(3)^3, x(1)^6*x(2)^2, x(1)^5*x(2)*x(3)^2, x(1)^4*x(2)*x(3)^2, x(1)^4*x(3)^4,
//----x(1)^3*x(3)^4, x(1)^2*x(3)^4,    x(1)^5*x(2)^2*x(3),    x(1)^4*x(2)^2*x(3),    x(1)^4*x(2)*x(3)^3,    x(1)^3*x(2)*x(3)^3,    x(1)^2*x(2)*x(3)^3, x(3)^5,
//----x(1)^2*x(2)*x(3)^4, x(1)^3*x(2)*x(3)^4, x(1)^4*x(2)^2*x(3)^2, x(1)^3*x(2)^2*x(3)^2, x(1)^2*x(2)^2*x(3)^2, x(1)*x(2)*x(3)^4, x(1)^2*x(2)^2*x(3)^3,
//----x(1)^3*x(2)^2*x(3)^3, x(2)^2*x(3)^4, x(1)^2*x(2)*x(3)^5, x(1)*x(2)^3*x(3)^3, x(1)^5*x(3)^2, x(1)^3*x(3)^3, x(1)*x(3)^4, x(1)*x(2)^2*x(3)^3, x(2)^3*x(3)^3;


//---10 monomials+8 monomials-----
//---list lmonx=x(1)^4*x(3)^4, 1*x(1)^3*x(3)^4, 1^2*x(1)^2*x(3)^4, x(1)^4*x(2)*x(3)^3, 1*x(1)^3*x(2)*x(3)^3, 1^2*x(1)^2*x(2)*x(3)^3, x(1)^4*x(2)^2*x(3)^2,
//--1*x(1)^3*x(2)^2*x(3)^2, 1^2*x(1)^2*x(2)^2*x(3)^2, 1^3*x(1)*x(3)^4,       1*1^7, 1*1^6*x(1), 1*1^5*x(1)^2, 1*1^4*x(1)^3, 1*1^3*x(1)^4, 1*1^2*x(1)^5,
//--1*1^1*x(1)^6, 1*x(1)^7, x(1)^8;

//---10 monomials+1 monomial--change the last monomial x(1)^a, a=0,1,...,8-----
list lmonx=x(1)^4*x(3)^4, 1*x(1)^3*x(3)^4, 1^2*x(1)^2*x(3)^4, x(1)^4*x(2)*x(3)^3, 1*x(1)^3*x(2)*x(3)^3, 1^2*x(1)^2*x(2)*x(3)^3, x(1)^4*x(2)^2*x(3)^2,
1*x(1)^3*x(2)^2*x(3)^2, 1^2*x(1)^2*x(2)^2*x(3)^2, 1^3*x(1)*x(3)^4,   x(1)^4;
//-------------------------------------------------------------------------------
for (i=1; i<=size(lmonx); i=i+1)
        { wlist=insert(wlist, 1 , size(wlist));}
ring r3=(0,z), (x(1..n+1), t(1..size(lmonx))),wp(wlist[1..n+1+size(lmonx)]);
poly cp=cyclotomic(2*d);   degext=deg(cp) div deg(var(1)); cp=subst(cp, x(1),z);
minpoly =number(cp); //--z is the 2d-th root of unity---
list BasisDR=imap(r2,BasisDR);   list lmonx=imap(r2,lmonx);
list Fn2p1=imap(r2,Fn2p1);
list Periods=imap(r2, Periods);

//---Defining the truncated ideal of the Hodge locus.--------
//---specifying 'size(Periods)'-variables to each zarib would make this faster.
list lII; list II;  intvec z1mosaviz2;
for (i=1; i<=size(Periods); i=i+1)
    {
    lII=insert(lII, HodgeLocusIdeal(mlist, lmonx, Fn2p1, BasisDR, Periods[i], tru,0), size(lII));
    }


list Al; for (i=1; i<=size(Periods); i=i+1){Al=insert(Al, 1,size(Al));} //----list of coefficiens all 1----
poly P; int k; list komak;
int k_1;  int k_2; list lofg; list SR; ideal Ikom; ideal IIi; list SR2;
list IIel;
list redli; list nonredli;
II=list();
         for (k=1; k<=size(Fn2p1); k=k+1)
             {
             komak=list();
             for (j=0; j<=tru; j=j+1)
                 {
                 P=0;
                 for (i=1; i<=size(lII); i=i+1)
                     {
                     P=P+Al[i]*lII[i][k][j+1];
                     }
                 komak=insert(komak, P, size(komak));
                 }
             II=insert(II, komak, size(II));
             }




      //--"Defining a minimal number of generators for the ideal of Hodge locus";
      k=1;
      while (II[k][2]==0){k=k+1;}
      Ikom=II[k][2];
      IIi=II[k][2];
      SR=k;
      for (i=k+1; i<=size(Fn2p1); i=i+1)
          {
          Ikom=std(Ikom);
          if (reduce(II[i][2], Ikom)<>0)
                                     {
                                     Ikom=Ikom, II[i][2]; SR=insert(SR, i, size(SR));
                                     IIi=IIi, II[i][2];
                                     }
          }
      //------------------------------


      "Minimum number of generators:", size(SR);
                       "Minimal generators of the ideal of Hodge locus";
                       for (i=1; i<=size(SR); i=i+1){ Fn2p1[SR[i]];}
                       "Other generators modulo the smaller ideal";

      SR2=list(); for (i=1; i<=size(Fn2p1); i=i+1){SR2=insert(SR2,i, size(SR2));}
      SR2=RemoveList(SR2, SR);
     //-----------------------------
      komak=list(); P=0;   IIel=list();

      for (i=1; i<=size(SR2); i=i+1)
          {
          "The generator:", Fn2p1[SR2[i]];
                  IIel=II[SR2[i]];
                  P=0; lofg=list();

         for (j=2; j<=size(IIel); j=j+1)
            {
                      "Checking ",j-1,"-reducedness.";
                      P=IIel[j]-P; komak=division(P, IIi);   P=0;
                     if (komak[2][1]<>0)
                                  {
                                  "The  Hodge locus for (a_1,a_2)=", Al, "is not", j-1, "-smooth or reduced.";

                                  j=size(IIel)+1; i=size(SR2)+1;  //---getting out of the loop.
                                  }
                     lofg=insert(lofg, komak[1], size(lofg));
                            for (k_1=1; k_1<=size(lofg); k_1=k_1+1)
                                {
                                 for (k_2=1; k_2<=size(SR); k_2=k_2+1)
                                     {
                                      if (j<size(IIel))
                                         {
                                         P=P+lofg[k_1][k_2,1]*II[SR[k_2]][j-k_1+2];
                                         }
                                     }
                                }

                }

            }
\end{verbatim}
}


\def\cprime{$'$} \def\cprime{$'$} \def\cprime{$'$} \def\cprime{$'$}

\end{document}